\documentclass[preprint,12pt]{elsarticle}

\usepackage{amssymb}
\usepackage{amsmath}
\usepackage{amsthm}

\newcommand{\Ff}{\mathbb{F}_}
\newcommand{\Pp}{\mathbb{P}}
\newcommand{\Aa}{\mathbb{A}}

\newcommand{\Cc}{\mathbb{C}}
\newcommand{\ov}{\overline}

\newtheorem{thm}{Theorem}
\newtheorem{prop}{Proposition}
\newtheorem{remark}{Remark}
\newtheorem{conj}{Conjecture}
\newtheorem{lemma}{Lemma}
\journal{}

\begin{document}

\begin{frontmatter}

%% Title, authors and addresses

%% use the tnoteref command within \title for footnotes;
%% use the tnotetext command for theassociated footnote;
%% use the fnref command within \author or \affiliation for footnotes;
%% use the fntext command for theassociated footnote;
%% use the corref command within \author for corresponding author footnotes;
%% use the cortext command for theassociated footnote;
%% use the ead command for the email address,
%% and the form \ead[url] for the home page:
%% \title{Title\tnoteref{label1}}
%% \tnotetext[label1]{}
%% \author{Name\corref{cor1}\fnref{label2}}
%% \ead{email address}
%% \ead[url]{home page}
%% \fntext[label2]{}
%% \cortext[cor1]{}
%% \affiliation{organization={},
%%             addressline={},
%%             city={},
%%             postcode={},
%%             state={},
%%             country={}}
%% \fntext[label3]{}

\title{A new proof of the Carlitz-Wan conjecture on exceptional polynomials}

%% use optional labels to link authors explicitly to addresses:
%% \author[label1,label2]{}
%% \affiliation[label1]{organization={},
%%             addressline={},
%%             city={},
%%             postcode={},
%%             state={},
%%             country={}}
%%
%% \affiliation[label2]{organization={},
%%             addressline={},
%%             city={},
%%             postcode={},
%%             state={},
%%             country={}}

\author{Yilong Hu\footnote{Corresponding author.\\ email address: huyl10@sjtu.edu.cn}, Zhiyao Zhang} %% Author name

%% Author affiliation
\affiliation{organization={Shanghai Jiao Tong University},%Department and Organization
            addressline={800 Dongchuan Rd.}, 
            city={Shanghai},
            postcode={200240},
            country={China}}

%% Abstract
\begin{abstract}
%% Text of abstract
We give a new proof of the Carlitz-Wan conjecture, previously proved by Lenstra (1995). The proof we present uses Weil’s Conjecture for curves over finite fields and a result of Bombieri and Katz. 
\end{abstract}

%% Keywords
\begin{keyword}
Carlitz-Wan conjecture, exceptional polynomials
\end{keyword}

\end{frontmatter}

%% Add \usepackage{lineno} before \begin{document} and uncomment 
%% following line to enable line numbers
%% \linenumbers

%% main text
%%

%% Use \section commands to start a section
\section{Introduction}
\label{sec1}
Exceptional polynomials, as a generalization of permutation polynomials over finite fields, play a crucial role in coding theory and cryptography. Let $\mathbb{F}_q$ be the finite field of $q$ elements with characteristic $p$, and $f$ be a polynomial over $\mathbb{F}_q$. The polynomial $f$ is called an exceptional polynomial if $f$ is a permutation function on infinitely many finite extensions of $\mathbb{F}_q$.

%A widely used characterization of exceptional polynomials is as follows:
%\begin{thm}[Davenport and Lewis \cite{davenport1963notes}]
%    Let $f \in \mathbb{F}_q[x]$ be a polynomial. Define the following bivariate polynomial over $\mathbb{F}_q$:
%    $$F(x,y)=\frac{f(x)-f(y)}{x-y}$$
%    Then $f$ is an exceptional polynomial over $\mathbb{F}_q$ if and only if every irreducible factor of $F(x,y)$ over $\mathbb{F}_q$ can be further decomposed in a finite extension of $\Ff{q}$. In other words, the only absolutely irreducible factors of $f(x)-f(y)$ over $\mathbb{F}_q[x,y]$ are the constant multiples of $x-y$.
%\end{thm}

%Indeed, the property above is the original definition of exceptional polynomials, which was first introduced in \cite{davenport1963notes}. Cohen discussed the permutation property of exceptional polynomials in \cite{cohen1970distribution}, of which the definition in this paper is a direct corollary. It indicates that 

%Exceptional polynomials have a geometric origin: the exceptionality of polynomials is determined by the irreducible decomposition of algebraic curve $f(x)=f(y)$. This inspired us to examine properties of exceptional polynomials through the lens of algebraic curves. It seems that we can prove a conjecture of Carlitz-Wan (now a theorem) by considering algebraic curves of the form: $$f(x)-y^d-y^{d-1}=a.$$

The Carlitz-Wan conjecture provides important information on the degrees of exceptional polynomials. In 1966, Carlitz \cite{carlitz1966number} made a conjecture equivalent to the following proposition: there is no exceptional polynomial of even degree over $\Ff{q}$ with $q$ odd. In 1993, the conjecture was generalized by Wan \cite{wan1993generalization}.

\begin{conj}[Carlitz-Wan]
    If $f$ is an exceptional polynomial over $\mathbb{F}_q$ with degree $d$, then $d$ is coprime to $q-1$.
\end{conj}

There have been several proofs of the Carlitz conjecture and its generalization so far. In \cite{fried1993schur}, Fried, Guralnick and Saxl presented a very detailed analysis of the monodromy groups of exceptional polynomials, which led them to a proof of Carlitz's conjecture. Only one year later, Lenstra discovered a simple and elementary proof of the Carlitz-Wan conjecture, which does not rely on primitive group theory or classification of finite simple groups \cite{cohen1995lenstra}. The arguments made more detailed use of the ramification groups at infinity of the polynomials $f(x)-t$ over $\Ff{q}(t)$. Lenstra presented four different versions of his proof in different settings; however, his original proof was not published, only given informally in his lectures. Two other proofs, using the same general strategy, appeared in the appendix of the paper \cite{guralnick1997exceptional} by Guralnick and M\"uller. Quite recently, the paper \cite{ding2025exceptional} by Ding, Xiong and Zhang also gave quick proofs of the conjecture.

The main purpose of this paper is to give a new proof of the Carlitz-Wan conjecture. The strategy is different from any of the proofs mentioned above. The proof we present below relies on Weil's Conjecture for curves over finite fields and a result of Bombieri and Katz. The remainder of this paper is organized as follows: we will introduce a basic fact of exceptional polynomials, basic notions of algebraic geometry and Bombieri and Katz's result in Section 2, and complete the proof in Section 3.

\section{Preliminaries}

\subsection{Basic facts of exceptional polynomials}

Apart from the definition of exceptional polynomials, we rarely make use of other properties of exceptional polynomials in our proof. That said, there is still one minor fact that we need. It can be found in Michael Zieve's survey article in \textit{Handbook of Finite Fields} \cite[8.4]{mullen2013handbook}, together with various results concerning the classification of exceptional polynomials.

\begin{prop}
\label{111}
    Let $f$ be an exceptional polynomial over $\Ff{q}$. Then there exists a positive integer $M$ such that $f$ permutes $\Ff{q^m}$ when $(m,M)=1$.
\end{prop}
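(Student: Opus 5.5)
We prove Proposition \ref{111} through the standard geometric characterization of exceptionality: $f\in\Ff{q}[x]$ is exceptional exactly when the curve $\frac{f(x)-f(y)}{x-y}=0$ has no absolutely irreducible component defined over $\Ff{q}$. The plan has three steps. First, from the definition (permutation on infinitely many extensions), deduce that this condition holds. Second, use the factorization of $\frac{f(x)-f(y)}{x-y}$ over $\ov{\Ff{q}}$ to define $M$. Third, show that for $(m,M)=1$ the condition persists over $\Ff{q^m}$ and forces $f$ to permute $\Ff{q^m}$.

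\textbf{Step 1.} Write $F(x,y)=\frac{f(x)-f(y)}{x-y}$, where $\deg f=d$. If $f$ permutes $\Ff{q^n}$, then every $\Ff{q^n}$-point of $F=0$ satisfies $f(a)=f(b)$, hence $a=b$. Such diagonal points satisfy $F(a,a)=f'(a)=0$. If $f'\equiv 0$ then $f$ is a $p$-th power composite, and we reduce by stripping Frobenius, which is harmless for permutation properties. Otherwise there are at most $d-1$ such points.

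Suppose some absolutely irreducible component $C$ of $F=0$ is defined over $\Ff{q}$. By the Weil bound (Lang--Weil for possibly singular plane curves), $C$ has $q^n+O(d^2q^{n/2})$ points over $\Ff{q^n}$, which exceeds $d$ for $n$ large. This contradicts $f$ permuting infinitely many $\Ff{q^n}$. Hence no absolutely irreducible factor of $F$ is defined over $\Ff{q}$.

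\textbf{Step 2.} Let $G_1,\dots,G_r$ be the absolutely irreducible factors of $F$, all defined over some $\Ff{q^N}$. The Frobenius $\sigma$ permutes them. Let $M$ be the least common multiple of the lengths of the $\sigma$-orbits, or simply $M=N$.

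If $(m,M)=1$, then $\sigma^m$ generates the same cyclic group acting on $\{G_i\}$ as $\sigma$ does, so a factor fixed by $\sigma^m$ is fixed by $\sigma$. By Step 1 no factor is fixed by $\sigma$, so none is fixed by $\sigma^m$. Hence over $\Ff{q^m}$ again no absolutely irreducible component of $F=0$ is defined over the base field.

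\textbf{Step 3.} This is the main obstacle: the converse direction, showing that this condition forces $f$ to permute $\Ff{q^m}$ for every such $m$, not just for large $m$. An $\Ff{q^m}$-rational point $P$ of $F=0$ lying on a single component $G_i$ also lies on $\sigma^m(G_i)\neq G_i$. So every off-diagonal rational point is an intersection point of two distinct components, and by Bézout there are at most $O(d^2)$ such points.

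For large $m$ (with $(m,M)=1$), compare with the value count. If $f$ is not injective on $\Ff{q^m}$, one usually gets many collisions, via Cohen's argument using the Chebotarev density theorem for the function field extension $\Ff{q}(x)/\Ff{q}(f(x))$. The cleaner route is exactly that: exceptionality means that the geometric and arithmetic monodromy groups $G\le A$ satisfy the property that every element of the coset $\sigma^m G$ fixes exactly one root. By the Chebotarev density theorem this yields permutation for all $m$ with $\sigma^m$ in the right coset class, which again holds when $(m,M)=1$ with $M=[A:G]$.

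So I would ultimately phrase Step 3 via Chebotarev, with $M=[A:G]$, citing Cohen's theorem. The finitely many small $m$ are handled by the coset argument, since it is exact rather than asymptotic.
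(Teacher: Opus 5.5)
The paper gives no proof of this proposition; it quotes it from Zieve's survey in the \emph{Handbook of Finite Fields}, so your argument has to stand on its own.

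\textbf{Steps 1 and 2 are sound.} Lang--Weil correctly shows that permuting infinitely many $\mathbb{F}_{q^n}$ forces $F=(f(x)-f(y))/(x-y)$ to have no absolutely irreducible factor over $\mathbb{F}_q$. Stripping $p$-th powers also ensures $x-y\nmid F$. The cyclic-group argument then shows this property persists over $\mathbb{F}_{q^m}$ for $(m,M)=1$.

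\textbf{The gap is Step 3.} This is the real content (Cohen's theorem), and the argument you sketch does not prove it.
The B\'ezout observation only bounds the off-diagonal $\mathbb{F}_{q^m}$-points of $F=0$ by $O(d^2)$. A single collision $f(a)=f(b)$ is one such point, so nothing is contradicted; counting only shows that $f$ misses $O(d^2)$ values.
The Chebotarev density theorem is the wrong tool. It controls proportions of places, so it cannot show that each of the $q^m$ values has exactly one preimage, for large $m$ any more than for small $m$.
The real obstruction is not small $m$. It is the ramified values $a$ (the critical values of $f$), which are present for every $m$ and where Frobenius is only defined modulo inertia. Your split ``large $m$ by Chebotarev, small $m$ by the coset argument'' never addresses them.

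\textbf{What is needed is an exact, place-by-place count.}
First, prove (you only assert it) that the conclusion of Step 2 means every element of the coset $\sigma^m G$ has exactly one fixed point on the roots of $f(X)-t$.
The average number of fixed points over $\sigma^m G$ is $1$, because $G$ is transitive.
The elements of $\sigma^m G$ fixing a root $x_1$ form a coset $\tau G_1$. The sum $\sum_{g\in\tau G_1}\#\mathrm{Fix}(g)$ equals $|G_1|$ times the number of $\tau$-stable $G_1$-orbits, i.e.\ $|G_1|(1+r)$, where $r$ is the number of absolutely irreducible factors of $F$ defined over $\mathbb{F}_{q^m}$.
With $r=0$, each element of $\tau G_1$ has exactly one fixed point. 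Conjugating by $G$ handles every element of $\sigma^m G$ that has a fixed point, and since the average is $1$, no element has none.

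Second, for each $a\in\mathbb{F}_{q^m}$, take a place of the Galois closure over $t=a$ with inertia group $I$ and Frobenius $\phi$. Then $\phi I\subseteq\sigma^m G$. The number of $x_0\in\mathbb{F}_{q^m}$ with $f(x_0)=a$ equals the number of residue-degree-one places of $\mathbb{F}_{q^m}(x)$ over $t=a$. That is the number of $\phi$-stable $I$-orbits, which is $|I|^{-1}\sum_{g\in\phi I}\#\mathrm{Fix}(g)=1$. This treats ramified and unramified $a$ uniformly, with no largeness assumption on $m$.

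Alternatively, cite Cohen's theorem cleanly as a black box and delete the Chebotarev discussion. Steps 1--2 plus that citation then give a complete proof, which is more than the paper provides, since it takes the whole statement from the literature.
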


As a corollary, there exists an infinite tower of extensions of finite fields $\Ff{q} \subseteq k_1 \subseteq k_2 \subseteq \ldots$ such that $f$ permutes $k_i$ for every $i$.

\subsection{Basic notions of algebraic geometry}

Let $K$ be an algebraically closed field. We denote by $\Aa^2(K)$ the affine space $K^2$, and by $\Pp^2(K)$ the projective plane over $K$. Both spaces are endowed with their respective Zariski topologies over $K$, where a closed set is the zero locus of a set of polynomials in $K[x,y]$, or of a set of homogeneous polynomials in $K[x_0,x_1,x_2]$. The space $\Pp^2(K)$ (with coordinates $x_0,x_1,x_2$) can be covered with standard open sets $\{x_i \neq 0\}$ $(i=0,1,2)$, and each of them can be identified with $\Aa^2(K)$. For example, $\{x_0 \neq 0\}$ can be identified with $\Aa^2(K)$ via the map $(x_0:x_1:x_2) \mapsto (x_{1}/x_0,x_2/x_0)$.

An affine curve is the zero locus of a single polynomial $f = 0$ in $\Aa^2(K)$, where $f \in K[x,y]$. This curve is \textit{smooth} if and only if $f, \frac{\partial f}{\partial x}$ and $\frac{\partial f}{\partial y}$ do not share common zeros in $\Aa^2(K)$. Similarly, a projective curve is the zero locus of a homogeneous polynomial $f=0$ in $K[x_0,x_1,x_2]$, and it is \textit{smooth} if and only if it is smooth on each of the standard open sets. Due to B\'ezout's theorem, a smooth projective curve is always irreducible.

\subsection{Lower bound for $\#X(\Ff{q})-q-1$}

Let $X$ be a projective smooth curve over $\Ff{q}$ defined by a single homogeneous polynomial $f \in \Ff{q}[x,y,z]$. Denote by $X(\Ff{q})$ the set of $\Ff{q}$-rational points on $X$, namely, the set of solutions to $f = 0$ with coordinates in $\Ff{q}$. The celebrated Weil Conjecture for curves states the following:

\begin{thm}[Weil]
    Let the genus of $X$ be $g$. Then there exist $2g$ algebraic integers $\alpha_1,\alpha_2,\ldots,\alpha_{2g} \in \Cc$ such that $$\#X(\Ff{q^n}) = q^n+1 - \sum_{i=1}^{2g} \alpha_i^{n}.$$ Furthermore, all $\alpha_i$ are $q$-Weil numbers, i.e. as a complex number, any Galois conjugate of $\alpha_i$ has complex modulus $q^{1/2}$.
\end{thm}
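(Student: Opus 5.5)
The statement is Weil's theorem. I would prove it in two stages: first establish the shape of the formula (rationality and the functional equation of the zeta function), then prove the bound $|\alpha_i| = q^{1/2}$ by the Stepanov--Bombieri method.

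\textbf{Stage 1: rationality and the functional equation.} Put $N_n = \#X(\Ff{q^n})$ and define
$$Z(T)=\exp\Big(\sum_{n\ge1} N_n T^n/n\Big).$$
Grouping points into closed points gives the Euler product, hence $Z(T)=\sum_{D\ge 0} T^{\deg D}$, the sum running over effective $\Ff{q}$-rational divisors.

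Let $h$ be the (finite) class number of degree-zero divisor classes. Riemann--Roch says that for $n>2g-2$ each divisor class of degree $n$ contains exactly $(q^{n-g+1}-1)/(q-1)$ effective divisors. Summing the geometric series then gives
$$Z(T)=\frac{L(T)}{(1-T)(1-qT)},$$
with $L\in\mathbb{Z}[T]$ of degree at most $2g$ and $L(0)=1$. One needs a divisor of degree $1$ here, and this also follows from the same counting.

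The duality $D\mapsto K-D$ from Riemann--Roch yields the functional equation
$$L(T)=q^gT^{2g}L(1/(qT)).$$
So $\deg L=2g$, and $L(T)=\prod_{i=1}^{2g}(1-\alpha_iT)$. Taking the logarithmic derivative gives $N_n=q^n+1-\sum_i\alpha_i^n$. The $\alpha_i$ are algebraic integers, being roots of the monic integer polynomial $T^{2g}L(1/T)$. Because $L$ has rational coefficients, the multiset $\{\alpha_i\}$ is Galois-stable, so the claim about conjugates reduces to showing $|\alpha_i|=q^{1/2}$. The functional equation gives $\{\alpha_i\}=\{q/\alpha_i\}$, so it suffices to prove $|\alpha_i|\le q^{1/2}$.

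\textbf{Stage 2: reduction to a one-sided bound.} It suffices to have a one-sided estimate
$$N_n\le q^n+1+Cq^{n/2}$$
for all $n$ in an arithmetic progression $n\equiv 0 \pmod r$, with $C$ independent of $n$. Passing to $\Ff{q^r}$ replaces $\alpha_i$ by $\alpha_i^r$, so the restriction to a progression is harmless.

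The estimate gives $\mathrm{Re}\sum_i \alpha_i^n\ge -Cq^{n/2}$ for the relevant $n$. Now suppose some $|\alpha_i|>q^{1/2}$, and let $\rho$ be the maximum modulus. By Dirichlet simultaneous approximation, choose $n$ so that every $\alpha_j$ of modulus $\rho$ has argument of $\alpha_j^n$ near $\pi$ (first replace $n$ by a shift: take $n=m+n_0$ with $\alpha_j^{n_0}$ pointing roughly opposite). Then $\mathrm{Re}\sum\alpha_i^n\approx -c\rho^n$, which contradicts the estimate. Alternatively, one can argue that $\sum_n(\sum_i\alpha_i^n+Cq^{n/2})T^n$ has nonnegative real parts and apply a Pringsheim/Landau-type argument on radii of convergence.

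\textbf{Stage 3: Stepanov--Bombieri upper bound (the main obstacle).} Assume $q$ is a square, say $q=q_0^2$, and $q>(g+1)^4$; this is achieved by a finite base extension. The target is $N_1\le q+1+(2g+1)q^{1/2}$.

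Fix a rational point $P_0$ and let $F$ be the $q$-Frobenius. Consider functions of the form $\sum_j f_j^{q_0}\,(g_j\circ F)$, where $f_j\in L(mP_0)$ and $g_j\in L(mP_0)$ are suitably chosen. Such a function vanishes at every rational point $P\ne P_0$ to order at least $q_0$, because $g_j\circ F$ agrees with $g_j$ at rational points. Its pole order at $P_0$ is controlled by $m q_0+ mq$.

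The delicate step is proving that the map $\sum f_j^{q_0}\otimes g_j\mapsto \sum f_j^{q_0}g_j$ is injective on a suitable subspace. I would show this using a basis of $L(mP_0)$ adapted to pole orders at $P_0$, together with the $p$-th power structure, for $m<q_0$. With injectivity in hand, Riemann--Roch dimension counts produce a nonzero such function. Comparing its zeros with its poles gives $(N_1-1)q_0\le$ the degree bound. Choosing $m\approx q_0+2g$ then yields the stated estimate. This construction is where I expect almost all of the difficulty to lie; Stages 1 and 2 are formal consequences of Riemann--Roch and elementary analysis.
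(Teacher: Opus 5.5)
The paper gives no proof of this statement; it quotes Weil's theorem as a black box, so your proposal has to stand on its own. Stage~1 is the standard Riemann--Roch argument and is fine as a sketch.

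The genuine gap is Stage~2. A one-sided estimate $N_n\le q^n+1+Cq^{n/2}$ does \emph{not} force $|\alpha_i|\le q^{1/2}$.
\begin{itemize}
\item Your Dirichlet step fails when a root of maximal modulus is a positive real number. Then $\alpha_j^{n}>0$ for every $n$, and no shift $n_0$ helps.
\item The Pringsheim variant only shows that $\rho$ itself is one of the $\alpha_i$, which contradicts nothing.
\end{itemize}
Concretely, take $q=9$, $g=1$, $L(T)=1-7T+9T^2$. This satisfies every property produced in Stage~1: the functional equation, $L(1)=3>0$, and nonnegative counts of closed points of each degree. It gives $N_n=q^n+1-\alpha^n-\beta^n<q^n+1$ for all $n$, yet $\alpha=(7+\sqrt{13})/2>3$. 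So no analytic argument can extract the Riemann hypothesis from the upper bound for $X$ alone.

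The missing idea is a matching lower bound $N_n\ge q^n+1-Cq^{n/2}$. In Bombieri's proof it comes from applying the upper bound to the Frobenius twists $Y_\sigma$ of a Galois cover $Y\to\mathbb{P}^1$ through which $Y\to X$ factors; take the group $G$ with all automorphisms defined over the base field. The identity $\sum_{\sigma\in G}\#Y_\sigma(\mathbb{F}_q)=|G|(q+1)+O(1)$ turns upper bounds for all twists into a lower bound for each. Then, with $H=\mathrm{Gal}(Y/X)$, the identity $|H|\,\#X(\mathbb{F}_q)=\sum_{\sigma\in H}\#Y_\sigma(\mathbb{F}_q)+O(1)$ transfers it to $X$.

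Stage~3 is also assembled incorrectly.
\begin{itemize}
\item At a rational point $P\neq P_0$, your function $\sum_j f_j^{q_0}(g_j\circ F)=\sum_j f_j^{q_0}g_j^{q}$ takes the value $\bigl(\sum_j f_j^{q_0}g_j\bigr)(P)$. So it vanishes there only if you impose $\sum_j f_j^{q_0}g_j=0$. The map $\sum_j f_j^{q_0}\otimes g_j\mapsto\sum_j f_j^{q_0}g_j$ must therefore have a \emph{nonzero kernel}, not be injective.
\item What must be injective is $(g_j)\mapsto\sum_j f_j g_j^{q_0}$, because your function is the $q_0$-th power of $\sum_j f_j g_j^{q_0}$. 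This holds when the $f_j$ form a basis of $L(nP_0)$ with distinct pole orders and $n<q_0$: the valuations at $P_0$ are then distinct modulo $q_0$.
\item The $g_j$ must range over a larger space $L(mP_0)$, so that $\dim L(nP_0)\cdot\dim L(mP_0)>\dim L((q_0n+m)P_0)$. With $n=q_0-1$ and $m=q_0+2g$, this inequality is exactly $q_0>(g+1)^2$.
\item Comparing zeros with poles then gives $(N_1-1)q_0\le q_0n+qm$, i.e.\ $N_1\le q+(2g+1)q_0$.
\end{itemize}
A single parameter $m$ that is both $<q_0$ and $\approx q_0+2g$ cannot do both jobs.
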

One wants to measure how far the quantity $\#X(\Ff{q})$ may differ from the main term $q+1$. Since $\alpha_i$ are $q$-Weil numbers, we have: $$\lvert \#X(\Ff{q})-q-1 \rvert \leq 2gq^{1/2}.$$ This is the famous Hasse-Weil bound. Intuitively, this bound shows that the quantity $\#X(\Ff{q})$ should be reasonably close to $q+1$. Additionally, for a fixed $X$, $\#X(\Ff{q^t})$ should not stay too close to $q^t+1$ either (as $t \to \infty$), unless $\#X(\Ff{q^t})=q^t+1$: \begin{thm}[Bombieri and Katz \cite{bombieri2010note}]
    Let $X$ be a projective smooth curve over $\Ff{q}$. Define a sequence of integers: $$A(n) = \sum_{i=1}^{2g} \alpha_i^n$$ so that $\#X(\Ff{q^n})=q^n+1-A(n)$. Then for any $M > 0$, there exists a positive integer $N$ such that when $n>N$, either $A(n)= 0$ or $\lvert A(n) \rvert > M$.
\end{thm}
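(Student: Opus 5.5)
The plan is to treat $A(n)=\sum_{i=1}^{2g}\alpha_i^n$ as an integer-valued linear recurrence sequence and use a lower bound for the size of non-degenerate power sums. The bound I intend to invoke is the theorem of Evertse and of van der Poorten--Schlickewei, a consequence of Schmidt's Subspace Theorem. I am quoting it from memory, so it is the piece I am least certain of. It says that if $u(n)=\sum_j c_j\beta_j^n$ is non-degenerate, with algebraic $c_j\neq 0$, algebraic $\beta_j$, and no ratio $\beta_j/\beta_k$ ($j\neq k$) a root of unity, then for every $\varepsilon>0$ and all sufficiently large $n$ we have $|u(n)|\ge \max_j|\beta_j|^{n}\,H^{-\varepsilon n}$. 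Here $H$ is a suitable height; in the rational-integer-valued case it essentially reduces to $|u(n)|\ge \max_j|\beta_j|^{n(1-\varepsilon)}$. Since every $\alpha_i$ is a $q$-Weil number, every root of our recurrence has absolute value exactly $q^{1/2}$ under every complex embedding, so such a bound is far stronger than we need.

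The first step removes degeneracy. Group the distinct values among the $\alpha_i$ and write $A(n)=\sum_{k} m_k\gamma_k^n$ with distinct $\gamma_k$ and positive integer multiplicities $m_k$. Let $L$ be the least common multiple of the orders of all roots of unity that occur as ratios $\gamma_k/\gamma_l$. Fix a residue $r \bmod L$ and write $n=Ls+r$. Partition the $\gamma_k$ into classes by the relation ``$\gamma_k/\gamma_l$ is a root of unity''. Inside a class with representative $\gamma$, each member is $\zeta\gamma$ with $\zeta^L=1$. Hence $\gamma_k^{Ls+r}=\zeta^r\gamma^r\,(\gamma^L)^s$, and the class contributes $c_{\text{class}}(r)\,(\gamma^L)^s$ with $c_{\text{class}}(r)=\gamma^r\sum \zeta^r m_k$. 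Distinct classes give $L$-th powers $\gamma^L$ whose ratios are not roots of unity. So for fixed $r$, the sequence $s\mapsto A(Ls+r)$ is a power sum in distinct, pairwise non-degenerate roots $\beta_j=\gamma_j^L$, each of absolute value $q^{L/2}$, with coefficients $c_j(r)$ independent of $s$.

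The second step is a dichotomy for each of the finitely many $r$. If all $c_j(r)=0$, then $A(n)=0$ for every $n\equiv r \pmod L$. Otherwise, after discarding the zero coefficients, the sequence is a non-zero, non-degenerate power sum in $s$, and it is integer-valued. The growth theorem then gives $|A(Ls+r)|\ge q^{(L/2)s(1-\varepsilon)}$ for $s\ge s_0(r)$, which exceeds any prescribed $M$ once $s$ is large. In particular $A(n)\neq 0$ for these $n$. Taking $N$ larger than $L\max_r s_0(r)$ plus the threshold needed to beat $M$ in each non-vanishing class yields the statement: for $n>N$, either $A(n)=0$ or $|A(n)|>M$.

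The main obstacle is the appeal to the Subspace-theorem lower bound in the form needed here. One must check three things: that the coefficients $c_j(r)$ are nonzero algebraic numbers, that the $\beta_j$ are non-degenerate, and that the dominant-root hypothesis in its height form is met. The last is automatic because all roots share the modulus $q^{L/2}$ in every embedding. If I wanted to avoid that machinery, I would first try a $p$-adic argument: $A(n)$ is a rational integer, and one can control $\prod_{\sigma}$ of Galois conjugates. Failing that, I would use a Skolem--Mahler--Lech-type periodicity argument to show that the set $\{n: A(n)=c\}$ is finite for each fixed nonzero $c$ with $|c|\le M$. That finiteness is precisely what the statement requires, and it is exactly what the non-degenerate growth bound supplies.
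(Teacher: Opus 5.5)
The paper gives no proof of this statement. It quotes the result from Bombieri--Katz and uses it as a black box, so there is no internal argument to compare against.

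Your derivation is correct. Writing $n=Ls+r$ gives, for each $r$, either the zero sequence or a nonzero simple power sum in roots $\beta_j=\gamma_j^L$, no ratio of which is a root of unity. Every $\beta_j$ has modulus $q^{L/2}$ in every complex embedding. So the Evertse / van der Poorten--Schlickewei bound gives $|A(n)|\ge q^{n(1/2-\varepsilon)}$ for all large $n$ in the non-vanishing classes. This yields more than the statement asks: an almost optimal growth rate, and the fact that the zero set of $A$ is a union of residue classes mod $L$ up to finitely many $n$. The price is the Subspace Theorem, which makes $N$ ineffective.

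Your fallback is in fact a complete and much lighter proof of exactly the statement, so you should promote it to the main argument. Fix an integer $c\neq 0$ with $|c|\le M$.
\begin{itemize}
\item By Skolem--Mahler--Lech, $\{n: A(n)=c\}$ is a finite set together with finitely many full progressions $L's+r'$.
\item On such a progression, $\sum_\beta d_\beta\beta^s=c\cdot 1^s$ for all $s$, where $\beta$ runs over the distinct values of $\gamma_k^{L'}$.
\item Since $|\beta|=q^{L'/2}\neq 1$, the base $1$ differs from every $\beta$. Linear independence of distinct exponentials (Vandermonde) then forces $c=0$, a contradiction.
\end{itemize}
Hence each nonzero value is attained only finitely often. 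There are finitely many candidate values in $[-M,M]$, so the statement follows.

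This is exactly the form in which the paper uses the theorem: the sequence $A(n_i)$ is nonzero and bounded by $d$, so some fixed value recurs infinitely often. It also matches the paper's own remark invoking Skolem--Mahler--Lech. This route needs only Skolem's $p$-adic method rather than Schmidt's theorem, though it gives no growth rate.
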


\begin{remark}
    It could very well be the case that $\#X(\Ff{q^t})=q^t+1$ for infinitely many extensions $\Ff{q^t}$, i.e. $A(t)$ takes zero value infinitely many times. A straightforward example is the projective line $\Pp^1$ over $\Ff{q}$. A less trivial example is an exceptional cover of $\Pp^1$ \cite{fried1993schur}. This does not violate the theorem of Bombieri and Katz since it only asks for nonzero values of $A(n)$ to go to infinity.

    Moreover, if there are infinitely many zeros of $A(n)$, then the Skolem-Mahler-Lech theorem implies that the zeros must
    fall into a union of finitely many arithmetic progressions (apart from finitely many exceptions).
\end{remark}

\section{Proof of Conjecture}

Assume that an exceptional polynomial $f \in \Ff{q}[x]$ of degree $d$ exists, such that $(d,q-1) \neq 1$. By replacing $f(x)$ with $f(x)-f(0)$ we may assume that $f(0)=0$. A polynomial $f(x)$ is exceptional if and only if its $p$-th power $f(x)^p$ is exceptional. Without loss of generality, we require that there does not exist $g$ such that $f(x)=g(x)^p$. Otherwise, we may replace $f(x)$ with $g(x)$. Define a bivariate homogeneous polynomial of degree $d$ in $\Ff{q}[x,z]$ by $F(x,z) = z^d f(x/z)$. Due to Proposition \ref{111}, we observe that one may replace $\Ff{q}$ with a large extension $k$ such that $f$ is still exceptional on $k$. Note that the property $(d,q-1) \neq 1$ still holds after replacing $\Ff{q}$ by its finite extension. Next, we state and prove the following lemma.

\begin{lemma}
    For $q$ sufficiently large, there exist $0 \neq a,b \in \Ff{q}$, such that the projective curve defined in $\mathbb{P}^2(\ov{\Ff{q}})$ (with coordinates $x,y,z$) $$X: G(x,y,z) =0.$$ is smooth, where $G(x,y,z) = F(x,z)-y^d-b \cdot y^{d-1}z - az^d$.
\end{lemma}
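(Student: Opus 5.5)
The plan is to check smoothness of $X$ directly from the Jacobian criterion, treating the points at infinity ($z=0$) and the affine chart $z=1$ separately. In each case I will show that singular points can only occur for finitely many ``bad'' values of $b$, or of $a$ once $b$ is fixed, with the number of bad values bounded in terms of $d$ alone. Then for $q$ large enough there remain admissible nonzero $a,b\in\Ff{q}$.

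Write $f(x)=c_dx^d+c_{d-1}x^{d-1}+\dots$ with $c_d\neq 0$. The partial derivatives are
\begin{align*}
G_x&=F_x(x,z), \qquad G_y=-dy^{d-1}-(d-1)by^{d-2}z,\\
G_z&=F_z(x,z)-by^{d-1}-daz^{d-1}.
\end{align*}

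\emph{Points at infinity.} On $z=0$ the curve is given by $c_dx^d=y^d$.
\begin{itemize}
\item If $p\nmid d$, then $G_x=dc_dx^{d-1}$ and $G_y=-dy^{d-1}$ vanish simultaneously only at $x=y=0$, which is not a projective point. So $X$ is smooth there for every $a,b$.
\item If $p\mid d$, then $G_x$ and $G_y$ vanish identically on $z=0$. Here $G_z(x,y,0)=c_{d-1}x^{d-1}-by^{d-1}$. At a point at infinity $x\neq 0$, and with $t=y/x$ satisfying $t^d=c_d$, the condition $G_z=0$ becomes $b=c_{d-1}t^{1-d}$. Since $t^d=c_d$ has at most $d$ solutions, this excludes at most $d$ values of $b$.
\end{itemize}

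\emph{Affine chart.} Set $z=1$ and $g(x,y)=f(x)-y^d-by^{d-1}-a$. Then $g_x=f'(x)$ and $g_y=-y^{d-2}\bigl(dy+(d-1)b\bigr)$.
\begin{itemize}
\item Because of the normalization that $f$ is not a $p$-th power (over the perfect field $\Ff{q}$), $f$ is not a polynomial in $x^p$. Hence $f'\neq 0$ and it has at most $d-1$ roots.
\item Since $b\neq0$, the polynomial $y^{d-2}(dy+(d-1)b)$ is nonzero. This holds also when $p\mid d$, since then $(d-1)b=-b$. So it has at most two distinct roots.
\item A singular affine point $(x_0,y_0)$ must have $x_0$ among the roots of $f'$ and $y_0$ among these $\le 2$ values. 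It must also satisfy $g=0$, i.e.\ $a=f(x_0)-y_0^d-by_0^{d-1}$.
\end{itemize}
So, for fixed $b$, at most $2(d-1)$ values of $a$ are bad.

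\emph{Choice of $a,b$.} First choose $b\in\Ff{q}^\*$ outside the at most $d$ bad values. Then choose $a\in\Ff{q}^\*$ outside the at most $2(d-1)$ bad values determined by $b$. This is possible as soon as $q>3d$, say, which gives the ``$q$ sufficiently large'' in the statement. We may enlarge $q$ freely by passing to a larger extension $k$, via Proposition~\ref{111}.

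I expect the only delicate point to be the points at infinity when $p\mid d$. There Euler's relation gives no help and $G_x,G_y$ vanish identically, so the argument must rely on $G_z$; this is exactly where the extra term $by^{d-1}z$ is needed. The rest is routine counting. I also need to confirm that $d\ge 2$, since otherwise there is nothing to prove, and that the normalization of $f$ indeed gives $f'\neq0$.
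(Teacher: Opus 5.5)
Your proposal is correct and takes essentially the same route as the paper. Both use the Jacobian criterion: $b\neq 0$ is chosen so that $G_z\neq 0$ at the points at infinity, and then $a\neq 0$ is chosen to avoid the finitely many values $f(x_0)-y_0^d-by_0^{d-1}$, where $x_0$ runs over the roots of $f'$ and $y_0$ over the roots of $G_y$.

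Your version adds some careful refinements. You observe that the constraint on $b$ is only needed when $p\mid d$, whereas the paper imposes $G_z\neq 0$ uniformly. You treat the $p\mid d$ case of $G_y$, where the paper's $y_0=-b(d-1)/d$ is undefined. You make explicit that $f'\neq 0$ follows from the normalization that $f$ is not a $p$-th power. And you give the explicit bound $q>3d$, where the paper's condition at infinity reads ``$d>q$'' but evidently means $q>d$.
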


\begin{proof}
    Set-theoretically, $X$ can be split into two parts, the finite part $X_0:= X \cap \{z \neq 0\}$ and the points at infinity $X_1:=X \cap \{z =0\}$. One only needs to prove the curve can be chosen to be smooth on both sets. 

    To begin with, we consider the points at infinity. In this case, the points at infinity are $\{x^d-y^d=0,z=0\}$, which can be regarded as a subset of the projective line over $\ov{\Ff{q}}$. Note that there exist no points on $X$ such that $x=z=0$, so one only needs to focus on the standard open set $\{x\neq 0\}$. The affine curve $X \cap \{x \neq 0\}$ is defined by $G(1,y,z)=0$, and the partial derivative of $G(1,y,z)$ with respect to $z$ is equal to $F_z(1,z) - (ad)z^{d-1} - y^{d-1}\cdot b$. Substitute $z=0$, then the expression becomes $F_z(1,0)-y^{d-1}\cdot b$. Note that $y^d=1$ on $X_1$, so when $d>q$, one can choose some $b \in \Ff{q}$ such that the partial derivative of $G(1,y,z)$ with respect to $z$ is non-zero on every point of $X_1$. 
    
    Next, with the value of $b$ fixed, we turn our attention to the finite part. In this part, the curve $X_0=X \cap \{z \neq 0\}$ is the affine curve $G(x,y,1)=0$, and the partial derivatives of $G(x,y,1)$ with respect to $x, y$ equal to $(f'(x),-dy^{d-1}-b(d-1)y^{d-2})$. Let $y_0:=\frac{-b(d-1)}{d}$ and $y_1:=y_0^{d-1}(y_0+b)$. Since the value of $b$ is fixed, $y_0,y_1$ are also fixed. If the partial derivative with respect to $y$ is zero somewhere on $X_0$, then $y=0$ or $y=y_0$. In this case, either $f(x)=a$ or $f(x)=a+y_1$. To make $X_0$ smooth, all we need is to choose $a$ properly so that the partial derivative with respect to $x$ is non-zero when $f(x)=a$ or $f(x)=a+y_1$. This can always be done when $q$ is sufficiently large, since $f'(x)$ has at most $(d-1)$ zeros in $\ov{\Ff{q}}$. 
\end{proof}

Passing to a finite extension of $\Ff{q}$ if necessary, we assume a smooth $X$ exists. Due to Proposition \ref{111}, there exists a tower of finite extensions $\Ff{q} \subseteq k_1 \subseteq k_2 \subseteq \ldots$ such that $f$ is a permutation polynomial over each $k_i$. Note that $(d,\#k_i-1)\neq 1$. On the finite part $X_0$ of $X$, clearly $\#X_0(k_i) = \#k_i$, since $f$ permutes $k_i$. Indeed, for every fixed $y$, there exists a unique $x$ such that $f(x)=y^d-by^{d-1}+a$.

On the other hand, since $(d,\#k_i-1) \neq 1$, we have $\#(\mu_d \cap k_i) > 1$, where $\mu_d$ is the group of $d$-th roots of unity in $\ov{\Ff{q}}$. Elements of $X_1(k_i)$ have the form $(1:y:0)$, where $y^d=1$. Therefore, we have $1 < \#X_1(k_i) \leq d$ for every $i$.

Write $k_i = \Ff{q^{n_i}}$. Since
$$\#X(k_i) = \#X_0(k_i)+ \#X_1(k_i) = \#k_i+1+A(n_i).$$ We have $A(n_i) = \#X_1(k_i)-1$ nonzero and bounded by $d$. As $n_i \to \infty$, this contradicts Bombieri and Katz's results. QED

\section*{Funding sources}

This research did not receive any speciﬁc grant from funding agencies in the public, commercial, or not-for-proﬁt sectors.
%% If you have bib database file and want bibtex to generate the
%% bibitems, please use
%%
\section*{Acknowledgement}

The work was supported in part by the National Key Research and Development Program under Grant 2022YFA1004900 and 2025YFA1017202. The authors thank Michael Zieve, Daqing Wan and anonymous reviewers for helping us fix the error and offering valuable suggestions in the preparation of our article. The authors also thank Zeyu Lu and Jiaming Zhang for discussions.

\bibliographystyle{elsarticle-num} 
\bibliography{name}

%% else use the following coding to input the bibitems directly in the
%% TeX file.

%% Refer following link for more details about bibliography and citations.
%% https://en.wikibooks.org/wiki/LaTeX/Bibliography_Management

%%\begin{thebibliography}{00}

%% For numbered reference style
%% \bibitem{label}
%% Text of bibliographic item

%%\bibitem{lamport94}
%% Leslie Lamport,
%% \textit{\LaTeX: a document preparation system},
%% Addison Wesley, Massachusetts,
%% 2nd edition,
%% 1994.

%%\end{thebibliography}
\end{document}